\def\nd{{\mathbb{N}_d^n}}
\def\ot{{\otimes d}}
\newcommand{\re}{\mathbb{R}}
\newcommand{\mR}{\mathbb{R}}
\newcommand{\mA}{\mathcal{A}}
\newcommand{\mB}{\mathcal{B}}
\newcommand{\mr}{\mathbb{R}}
\newcommand{\N}{\mathbb{N}}
\newcommand{\A}{\mathcal{A}}
\newcommand{\lmd}{\lambda}
\newcommand{\eps}{\epsilon}
\newcommand{\dt}{\delta}
\def\af{\alpha}
\def\rank{\mbox{rank}}
\newcommand{\sig}{\sigma}
\newcommand{\Sig}{\Sigma}
\newcommand{\reff}[1]{(\ref{#1})}
\newcommand{\mc}[1]{\mathcal{#1}}
\newcommand{\mt}[1]{\mathtt{#1}}
\newcommand{\supp}[1]{\mbox{supp}(#1)}
\newcommand{\qmod}[1]{\mbox{QM}[#1]}
\newcommand{\ideal}[1]{\mbox{Ideal}[#1]}
\newcommand{\st}{\mathit{s.t.}}
\newcommand{\f}{\mathcal{F}}
\newcommand{\bdes}{\begin{description}}
	\newcommand{\edes}{\end{description}}
\newcommand{\bal}{\begin{align}}
	\newcommand{\eal}{\end{align}}
\newcommand{\bnum}{\begin{enumerate}}
	\newcommand{\enum}{\end{enumerate}}
\newcommand{\bit}{\begin{itemize}}
	\newcommand{\eit}{\end{itemize}}
\newcommand{\bea}{\begin{eqnarray}}
	\newcommand{\eea}{\end{eqnarray}}
\newcommand{\be}{\begin{equation}}
	\newcommand{\ee}{\end{equation}}
\newcommand{\baray}{\begin{array}}
	\newcommand{\earay}{\end{array}}
\newcommand{\bsry}{\begin{subarray}}
	\newcommand{\esry}{\end{subarray}}
\newcommand{\bca}{\begin{cases}}
	\newcommand{\eca}{\end{cases}}
\newcommand{\bcen}{\begin{center}}
	\newcommand{\ecen}{\end{center}}
\newcommand{\bbm}{\begin{bmatrix}}
	\newcommand{\ebm}{\end{bmatrix}}
\newcommand{\bmx}{\begin{matrix}}
	\newcommand{\emx}{\end{matrix}}
\newcommand{\bpm}{\begin{pmatrix}}
	\newcommand{\epm}{\end{pmatrix}}
\newcommand{\btab}{\begin{tabular}}
	\newcommand{\etab}{\end{tabular}}
\theoremstyle{plain}
\newtheorem{theorem}{Theorem}[section]
\newtheorem{prop}[theorem]{Proposition}
\newtheorem*{claim*}{Claim}
\newtheorem{thm}[theorem]{Theorem}
\theoremstyle{definition}
\newtheorem{exm}[theorem]{Example}
\numberwithin{equation}{section}
\numberwithin{table}{section}
\def\rn{{\mathbb{R}^n}}
\def\r{{\mathbb{R}}}
\def\a{{\mathcal A}}
\def\n{{\mathbb{N}}}
\def\nd{{\mathbb{N}_d^n}}
\def\n{{\mathbb{N}}}
\def\sm{{\mathbf{S}^d(\mathbb{R}^n)}}
\def\ot{{\otimes d}}
\begin{document}
%\title[Finite convergence of the Moment-SOS relaxations...]
%{Finite convergence of the Moment-SOS relaxations for generalized moment problem}

%\title[Convergence analysis for matrix sum-of-squares hierarchies]
%{Convergence analysis for matrix sum-of-squares hierarchies}

\title[Moment and Tensor Recovery Problems]
{Moment-SOS Relaxations for Moment and Tensor Recovery Problems}

\author[Lei Huang]{Lei Huang}
\address{Lei Huang, Jiawang Nie, and Jiajia Wang,
Department of Mathematics, University of California San Diego,
9500 Gilman Drive, La Jolla, CA, USA, 92093.}
\email{leh010@ucsd.edu,njw@math.ucsd.edu,jiw133@ucsd.edu}

\author[Jiawang Nie]{Jiawang Nie}
%\address{Jiawang Nie,  Department of Mathematics,
%	University of California San Diego,
%	9500 Gilman Drive, La Jolla, CA, USA, 92093.}
%\email{njw@math.ucsd.edu}

\author[Jiajia Wang]{Jiajia Wang}
%\address{Jiajia Wang,  Department of Mathematics,
%	University of California San Diego,
%	9500 Gilman Drive, La Jolla, CA, USA, 92093.}
%\email{jiw133@ucsd.edu}

\begin{abstract}
This paper studies moment and tensor recovery problems whose decomposing vectors
are contained in some given semialgebraic sets.
We propose Moment-SOS relaxations with generic objectives
for recovering moments and tensors,
whose decomposition lengths are expected to be low.
This kind of problems have broad applications in
various tensor decomposition questions.
Numerical experiments are provided to 
demonstrate the efficiency of this approach.
\end{abstract}

\subjclass[2020]{Primary: 90C23, 15A69; Secondary: 90C22.}
\keywords{moment, tensor, recovery, $K$-decomposable, relaxation}

\maketitle

\section{Introduction}

The moment recovery problem (MRP) studies how to find
a positive measure whose moments satisfy certain conditions.
In applications, people often deal with Borel measures.
For a Borel measure $\mu$ on $\r^n$,
its support is the smallest closed set $T\subseteq \r^n$
such that $\mu(\r^n\text{\textbackslash}T)=0$, for which we denote as $\supp{\mu}$.
Denote by $\nd$ the set of all monomial powers
$\alpha \coloneqq (\alpha_1,\dots,\alpha_n)$
with $|\af|  \coloneqq \af_1 + \cdots + \af_n \le d$.
For $x \coloneqq (x_1,\dots,x_n)$,
denote the monomial $x^{\alpha} \coloneqq x_1^{\alpha_1}\cdots x_n^{\alpha_n}$.
The MRP typically concerns the existence of a Borel measure $\mu$
%\begin{equation}
%\supp{\mu} \,\subseteq\,K,\, y_{\alpha}=\int x^{\alpha}d\mu ~(\alpha \in \nd),
%\end{equation}
satisfying linear equations
\begin{equation}   \label{sum1}
\sum_{\alpha\in\nd}a_{i,\alpha} \int x^{\alpha}d\mu
=b_i, \,\, i=1,\ldots, m.
\end{equation}
In the above, the $a_{i,\alpha}$, $b_i$ are given constants.
The integral  $y_{\alpha} \coloneqq  \int x^{\alpha}d\mu$
is called the $\af$th order moment of $\mu$.
The measure $\mu$ is called a representing measure for the multi labelled vector
$y = (y_\af)_{\af \in \nd}$.
If we denote the polynomial
\[
a_i(x)= \sum_{\alpha \in \nd}a_{i,\alpha}x^{\alpha},
\]
then \reff{sum1} is equivalent to
\begin{equation}  \label{sum2}
    \int a_i(x)d\mu  \,= \, b_i, \,\, i=1,\ldots,m.
\end{equation}
In some applications, people may also require that $\mu$ is supported in
a given semialgebraic set $K$ like
\begin{equation} \label{K}
K  \coloneqq
\left\{x \in \mathbb{R}^{n} \left| \begin{array}{l}
		c_{i}(x)=0~(i \in \mathcal{E}), \\
		c_{j}(x) \geq 0~(j \in \mathcal{I})
	\end{array}\right\},\right.
\end{equation}
where $c_i(x)$ and $c_j(x)$ are polynomials in $x$.
The $\mathcal{E}$ and $\mathcal{I}$ are the disjoint label sets
for equality and inequality constraints respectively.
The measure $\mu$ is called $r$-atomic
if $\supp{\mu}$ consists of $r$ distinct points.
In applications, people often prefer the measure $\mu$ satisfying \reff{sum2}
whose support is expected to have small cardinality.
The MRP has broad applications in various moment and tensor decomposition problems \cite{FanZhou17,FNZ19CP,lasserre2008semidefinite,nie2015linear,ZhouFan14,ZhouFan18}.

A typical condition for ensuring the existence of a representing measure
for a truncated multivariate sequence is flat extension
(see \cite{curto2005truncated,Lau05}).
When the support constraining set $K$ is compact,
Moment-SOS relaxations can be used to solve MRPs
(see \cite{dKL19,HKL20,lasserre2008semidefinite,nie2023moment}).
%, castro1, castro2}
When $K$ is unbounded, the homogenization can be applied to solve
the truncated moment problem
(see \cite{truncated2,huang2023generalized}).
When there is a feasible solution,
one can solve Moment-SOS relaxations to get finitely atomic representing measures
(see \cite{HKL20,nie2023moment}).
When it is infeasible, the nonexistence of a $K$-representing measure can be
detected by solving Moment-SOS relaxations
(see \cite{huang2023generalized,nie2015linear}).
Generally, to solve the MRP is a challenging task.

\subsection*{Tensor recovery}

A $d$th order tensor $\mA \in \mathbb{R}^{n_1 \times \cdots \times n_d}$
can be labeled as the multi-array
\[
\mA = (\mA_{i_1,\dots,i_d}),
1\leq i_1\leq n_1,\dots,1\leq i_d\leq n_d.
\]
For two tensors $\mA$ and $\mB$, their Hilbert-Schmidt inner product is
\[
\langle \mA,\mB \rangle  \, \coloneqq \,  \sum\limits_{1\leq i_j\leq n_j,j=1,\dots, d}
\A_{i_1,\dots,i_d}\mB_{i_1,\dots,i_d}.
\]
When $n_1=\cdots =n_d$, the tensor $\mA$ is said to be symmetric if
$\mA_{i_1,\dots,i_d}=\mA_{j_1,\dots,j_d}$ whenever $(i_1,\dots,i_d)$
is a permutation of $(j_1,\dots,j_d).$
The space of $d$th order symmetric tensors in
$\mathbb{R}^{n \times \cdots \times n}$ is denoted as $\mt{S}^d(\re^n)$.
When $\mA$ is real symmetric,
there always exist $u_1,\dots,u_{r_1},v_1,\dots,v_{r_2} \in \rn$
such that (see \cite{comon2008symmetric})
\begin{equation} \label{1.1}
    \a = (u_1)^{\ot}+\cdots +(u_{r_1})^{\ot}-(v_1)^{\ot}-\cdots -(v_{r_2})^{\ot}.
\end{equation}
In the above, $u^{\otimes d}$ denotes the $d$th tensor power of $u$, i.e.,
\[
(u^{\otimes d})_{i_1,\dots,i_d} =u_{i_1}\cdots u_{i_d}.
\]
The equation \reff{1.1} is called a symmetric decomposition of $\a$.
The smallest $r$ satisfying (\ref{1.1}) is called the  symmetric rank of $\a$.
We refer to \cite{Lim13,GPSTD,LRSTA17}
for recent work on symmetric tensor decompositions.
%However, finding the smallest $r$ such that {(\ref{1.1})} holds is NP-hard.

The tensor recovery problem (TRP) typically concerns whether or not
there exists a tensor satisfying some linear equations.
People often prefer a low rank tensor.
Let $\f_1,\cdots,\f_m \in \mt{S}^d(\re^n)$ be symmetric tensors
and let $b_1,b_2,\ldots,b_m$ be given constants. 
A TRP often concerns the existence of a tensor $\mA \in \mt{S}^d(\re^n)$,
preferably having low real symmetric rank, such that
\begin{equation}\label{1.5}
\langle \f_i,\a\rangle=b_i,\quad \, i = 1,\dots,m.
\end{equation}
In certain applications, people may also require the vectors
$u_i, v_i$ in \reff{1.1} to belong to
a given semialgebraic set $K$ as in (\ref{K}).
Suppose $\a$ admits the decomposition as in \reff{1.1}.
Let $\|\cdot\|$ denote the Euclidean norm and
let $\tilde{u}$ be the normalization of $u$, i.e., $\tilde{u}=\frac{u}{\|u\|}$.
For a point $\tilde{u}$, let $\dt_{\tilde{u}}$ denote the unit Dirac measure
supported at $\tilde{u}$. Denote
 \[
\mu_1 \coloneqq \|u_1\|^d\delta_{\tilde{u}_1}+\cdots+\|u_r\|^d\delta_{\tilde{u}_r},
 \]
 \[
\mu_2 \coloneqq  \|v_1\|^d\delta_{\tilde{v}_1}+\cdots+\|v_r\|^d\delta_{\tilde{v}_r},
 \]
\[
f_i(x) \coloneqq \langle \f_i,x^{\otimes d}\rangle,\,\,  i=1,\dots,m .
\]
One can see that
\[
 \langle \f_i,\a\rangle = \int f_i(x)d\mu_1-\int f_i(x)d\mu_2,\,\, i=1,\dots,m.
\]
Then, \reff{1.5} is equivalent to equivalent to the existence of
$\mu_1$, $\mu_2$ satisfying
\[
\int f_i(x)d\mu_1-\int f_i(x)d\mu_2 = b_i,\,\,i=1,\dots,m.
\]

%However, the problem of solving the rank function is NP-hard {\cite{hillar2013most}}.
When $K=\mR^n$, there exist numerical linear algebraic methods,
such as SVD, HOSVD, or T-SVD,  for solving the TRP
(see \cite{svd2,svdbased,existing1,successive,svd3}).
In this paper, we consider TRPs when $K$ is a semialgebraic set as in (\ref{K}).
It contains many difficult tensor decomposition problems,
such as completely positive tensor decompositions \cite{FanZhou17,FNZ19CP}
or positive sum of even powers decompositions \cite{nietensor}.
%%%%%%%%%%%%%%%%%%
\iffalse
\begin{enumerate}
[label=(\roman*)]
    \item When  $K= \mathbb{R}^n$ and $\lambda_i\in\mathbb{R}$, it is the classical symmetric tensor decomposition problem \cite{cover1}.
    \item When $K= \{x \in \mathbb{R}^n \mid x_i \geq 0\}$, $\lambda_i\in\mathbb{R}_+$, it transforms into completely positive $(CP)$-decomposition \cite{cover2}.
    \item When $K= \left\{\left(a^{n}, a^{n-1} b, \dots, a b^{n-1}, b^{n}\right): a, b \in \mathbb{R}\right\} \subseteq \mathbb{R}^{n+1}$, $\lambda_i\in\mathbb{R}$, it becomes
  a Vandermonde decomposition for symmetric tensors \cite{cover3}.
\end{enumerate}
\fi
%%%%%%%%%%%%%%%%%%%%
In some applications, people often prefer the tensor to be recovered
to have small length of decompositions. In view of moment decompositions,
this requires to look for measures $\mu_1$, $\mu_2$
whose supports have small cardinalities.
%We refer to {\cite{koldatensor,ji2019survey,de2009survey}}
%for overviews of various tensor decomposition methods and applications.

%A more detailed description of tensor power can be found in {\cite{koldatensor}}.

%For the nonsymmetric tensor, a similar issue has been studied for $K$ being the affine cone of a projective variety. Tensor $K$-rank, generic tensor $K$-rank, typical tensor $K$-rank, and on which $K$ every tensor admits a $K$-decomposition has been studied, referring to {\cite{de2008tensor, rank1, rank2, rank3}}. A moment approach for nonsymmetric tensor decomposition is proposed in {\cite{tang2015guaranteed}}.

\subsection*{Contributions}

This paper focuses on solving the generalized truncated moment problem
and the tensor recovery problem.
We propose to solve the following optimization problem
\begin{equation}\label{3.0}
\left\{\begin{array}{ll}
        \min &\int R\, d\mu\\
         \text{ s.t.}
        &\int a_i(x)d\mu = b_i ,\,\, i=1,\dots,m ,\\
        &\mu\in\mathscr{B}(K) .
    \end{array}\right.
\end{equation}
Here, $R$ is a generic sum of squares polynomial and $\mathscr{B}(K)$
denotes the set of Borel measures supported in $K$.
Our major contributions are:

\begin{enumerate}[label=(\roman*)]

\item We propose a Moment-SOS hierarchy to solve (\ref{3.0}).
Typically, the optimal measure of \reff{3.0} has a support of low cardinalities.
This is a reason for choosing the generic $R$.

\item We consider the tensor recovery problem for symmetric tensors
whose decomposing vectors are constrained in the semialgebraic set $K$.
This can be done by solving Moment-SOS relaxations.

\end{enumerate}

The paper is organized as follows.
In Section \ref{section2}, we introduce some preliminaries in polynomial optimization.
Section~\ref{section3} gives the Moment-SOS hierarchy for solving the MRP (\ref{3.0}).
Section~\ref{section4} studies how to solve TRP for positive $K$-decomposable tensors.
In Section~\ref{section5}, we study the recovery problem for general $K$-decomposable tensors.

\section{Preliminaries}
\label{section2}

\subsection*{Notation}

The symbol $\n$ (resp., $\r$) denotes the set of nonnegative integers (resp., real numbers).
For $x= \left(x_{1}, \ldots, x_{n}\right)$ and
$\alpha=\left(\alpha_{1}, \ldots, \alpha_{n}\right) \in \mathbb{N}^{n}$,
we denote $x^{\alpha} \coloneqq x_{1}^{\alpha_{1}} \cdots x_{n}^{\alpha_{n}}$
and $|\alpha| \coloneqq \alpha_{1}+\cdots+\alpha_{n} .$ For an integer $d>0,$
we denote the set of monomial powers by
\begin{equation}
 \begin{aligned} &\quad\mathbb{N}_{d}^{n} \coloneqq \left\{\alpha \in \mathbb{N}^{n}:
0 \leq|\alpha| \leq d\right\}.\\
\end{aligned}
\end{equation}
Let $\mathbb{R}[x] \coloneqq \mathbb{R}\left[x_{1}, \ldots, x_{n}\right]$ be the ring of polynomials in $x \coloneqq \left(x_{1}, \ldots, x_{n}\right)$ with real coefficients and let $\mathbb{R}[x]_{d}$
be the set of polynomials in $\mathbb{R}[x]$ with degrees at most $d$.
The degree of a polynomial $p$ is denoted as $\operatorname{deg}(p)$.
The $[x]_d$ stands for the vector of all monomials in $x$ with degrees at most $d$, i.e.,
\[
[x]_d  \, \coloneqq \,
\begin{bmatrix}
    1&x_1&\cdots&x_n&x_1^2&x_1x_2&\cdots&x_n^d
\end{bmatrix}^T.
\]
%%Let $[x]_d^{hom}$ denote the vector of all degree $d$ monomials in $x$.
For $t \in \mathbb{R},\lceil t\rceil$ (resp., $\lfloor t\rfloor)$
denotes the smallest integer not smaller (resp., the largest integer not bigger) than $t$. For a positive integer $m$, the notation $[m]$ denotes the set $\{1,\dots,m\}$.
For a matrix $A,\, A^{T}$ denotes its transpose. If $A$ is symmetric,
the inequality $A \succeq 0$ (resp., $A \succ 0$)
means that $A$ is positive semidefinite (resp., positive definite).

%
%\subsection{Some basics in polynomial optimization}
%
In the following, we  review some basics in polynomial optimization.
More detailed introductions can be found in the references \cite{HKL20,lasserre2015introduction,sos,nie2023moment}.
A polynomial $\sig \in \mathbb{R}[x]$ is said to be a sum of squares $(\mathrm{SOS})$
if $\sig =p_{1}^{2}+\cdots+p_{k}^{2}$ for some $p_{1}, \ldots, p_{k} \in \mathbb{R}[x].$
The cone of all SOS polynomials is denoted as $\Sigma[x].$
For a degree $d$, we denote the truncation
\[
\Sigma[x]_{d} \coloneqq  \Sigma[x] \cap \mathbb{R}[x]_{d}.
\]
For a polynomial tuple $h =  (h_{1}, \ldots, h_{m_1} )$,
the ideal generated by $h$ is
\[
\ideal{h} \coloneqq  h_{1} \cdot \mathbb{R}[x]+\cdots+h_{m_1} \cdot \mathbb{R}[x].
\]
The $k$th degree truncation of $\ideal{h}$ is
\begin{equation}
\ideal{h}_{k} \coloneqq h_{1} \cdot \mathbb{R}[x]_{k-\deg\left(h_{1}\right)}+
   \cdots+h_{m_1} \cdot \mathbb{R}[x]_{k-\deg\left(h_{m_1}\right)}.
\end{equation}
For a polynomial tuple $g \coloneqq \left(g_{1}, \ldots, g_{m_2}\right)$,
its quadratic module is
\begin{equation*}
\qmod{g} \coloneqq \Sigma[x]+g_{1} \cdot \Sigma[x]+\cdots+g_{m_2} \cdot \Sigma[x].
\end{equation*}
Similarly, the $k$th degree truncation of $\qmod{g}$ is
\begin{equation}
\qmod{g}_k \coloneqq \Sigma[x]_{k}+g_{1} \cdot \Sigma[x]_{k-\deg(g_1)}+\cdots+g_{m_2} \cdot \Sigma[x]_{k-\deg(g_{m_2})}.
\end{equation}
The set $\ideal{h}+\qmod{g}$ is said to be archimedean if
there exists a scalar $N>0$ such that
\[ N-\|x\|^{2} \in \ideal{h}+\qmod{g} . \]
For the semialgebraic set
\be \label{kk}
S = \left\{x \in \mathbb{R}^{n}\left| \begin{array}{ll}
   h_{i}(x)=0,  & (1 \leq i \leq m_1), \\
   g_{j}(x) \geq 0,\,  & (1 \leq j \leq m_2)\end{array}\right\}, \right.
\ee
if $f \in \ideal{h}+\qmod{g}$, then $f \geq 0$ on $S$.
When $\ideal{h}+\qmod{g}$ is archimedean, if $f > 0$ on $S$,
then we also have $f \in \ideal{h}+\qmod{g}$.
This conclusion is referred to as Putinar's Positivstellensatz.

\begin{thm}\cite{putinar1993positive}\label{putinar}
Suppose $\ideal{h}+\qmod{g}$ is archimedean. If a polynomial $f>0$ on $S$,
then $f \in \ideal{h}+\qmod{g}$.
\end{thm}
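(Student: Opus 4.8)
The plan is to argue by contradiction, combining a Hahn--Banach separation with a moment-representation (Haviland-type) theorem, with the archimedean hypothesis doing double duty. Set $M \coloneqq \ideal{h}+\qmod{g}$ and suppose $f>0$ on $S$ but $f\notin M$. First I would extract the structural consequences of the archimedean property that make the separation clean: since $N-\|x\|^2\in M$ for some $N>0$, the constant $1$ is an order unit for the convex cone $M$, meaning that for every $p\in\mathbb{R}[x]$ there is a scalar $\lambda>0$ with $\lambda-p\in M$ (and $\lambda+p\in M$). In particular $M-M=\mathbb{R}[x]$ and $M$ is a proper cone, so $f\notin M$ lets me separate: there is a nonzero linear functional $L\colon\mathbb{R}[x]\to\mathbb{R}$ with $L\ge 0$ on $M$ and $L(f)\le 0$. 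After rescaling I may assume $L(1)=1$, the nonzero normalization being possible precisely because $1$ is an order unit.

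Next I would exploit that $\Sigma[x]\subseteq M$, so $L(p^2)\ge 0$ for all $p\in\mathbb{R}[x]$; thus $L$ is a positive functional and its moment matrix is positive semidefinite. The bound $N-\|x\|^2\in M$ gives $L(\|x\|^2)\le N$, and more generally the order-unit property produces, for each monomial, a finite bound $L(x^{\alpha})\le \lambda_\alpha$. These bounds make $L$ continuous in the order-unit seminorm and are exactly what a multivariate moment-representation theorem requires: they guarantee that $L$ is represented by a Borel measure $\mu$, i.e. $L(p)=\int p\,d\mu$ for all $p\in\mathbb{R}[x]$, with $\mu$ supported on the compact set cut out by the archimedean certificate. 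The localizing conditions $L(h_i q)=0$ for all $q$ and $L(g_j\sigma)\ge0$ for all $\sigma\in\Sigma[x]$ then force $\supp{\mu}\subseteq S$, so $\mu$ is a nonzero measure supported on $S$.

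Finally I would close the contradiction: since $f>0$ on $S$ and $\mu\ne 0$ is supported on $S$, we obtain $L(f)=\int_S f\,d\mu>0$, contradicting $L(f)\le 0$. Hence $f\in M$, which is the claim.

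The step I expect to be the main obstacle is the passage from the abstract positive functional $L$ to an honest measure supported exactly on $S$. Positive semidefiniteness of the moment matrix alone does not yield a representing measure; one genuinely needs the archimedean growth bounds, both to secure existence of the measure and to localize its support via the constraints $h_i$ and $g_j$. The secondary delicate point is ensuring the separating functional can be taken nonzero and normalized with $L(1)=1$ rather than collapsing to the zero functional, and this too rests on $1$ being an order unit for $M$. An alternative route would first note that the archimedean condition forces $S$ to be compact and then invoke Schm\"udgen-type arguments, but obtaining membership in the quadratic module $M$ itself (not merely in the associated preordering) is exactly what the separation-plus-representation approach delivers directly.
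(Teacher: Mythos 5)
The paper itself offers no proof of this statement: it is Putinar's Positivstellensatz, quoted directly from \cite{putinar1993positive}, so there is no internal argument to compare yours against. Judged on its own merits, your outline reproduces the standard functional-analytic proof from the literature: archimedeanity makes $1$ an order unit of $M=\ideal{h}+\qmod{g}$, hence an algebraic interior point, so Eidelheit--Hahn--Banach separation applies to $f\notin M$; the separating functional is normalized by $L(1)=1$ (possible exactly because an order unit forces $L(1)>0$ for any nonzero $L\geq 0$ on $M$); a representing measure supported in $S$ is produced; and integrating $f>0$ against a nonzero measure on $S$ gives the contradiction. This architecture is sound, and you correctly flag the two genuinely delicate points.

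One correction is needed at the step you yourself identify as the main obstacle. The representation of $L$ by a measure cannot be obtained from Haviland's theorem, even in a ``Haviland-type'' formulation: Haviland requires $L\geq 0$ on \emph{all} polynomials nonnegative on $S$, which is not known at this stage --- establishing that is essentially equivalent to the theorem being proved, so the appeal would be circular. The two standard repairs are: (a) the GNS/spectral-theorem route --- positivity of $L$ on $\Sigma[x]$ gives a pre-Hilbert space on which multiplication by each $x_i$ is a symmetric operator, bounded with norm at most $\sqrt{N}$ because $(N-\|x\|^2)p^2\in M$ for every $p$; the commuting bounded self-adjoint extensions have a joint spectral measure, which yields a representing measure supported in the ball of radius $\sqrt{N}$; or (b) the Berg--Maserick theorem on exponentially bounded positive semidefinite sequences, for which your per-monomial bounds $|L(x^\alpha)|\leq\lambda_\alpha$ must be sharpened to the multiplicative form $|L(x^\alpha)|\leq (\sqrt{N})^{|\alpha|}$ (this follows because the order-bounded elements form a subalgebra). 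After either route, your localization step is correct but also needs a small argument rather than being automatic: compactness of $\supp{\mu}$ makes polynomials dense in $L^2(\mu)$; then $L(h_iq)=0$ with $q=h_i$ gives $\int h_i^2\,d\mu=0$, and for an inequality constraint one tests $L(g_j p^2)\geq 0$ against polynomials approximating a bump function concentrated near a hypothetical point of $\supp{\mu}$ where $g_j<0$. With these substitutions, your proposal is a correct reconstruction of the proof in \cite{putinar1993positive} and in the standard references.
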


For a degree $d$, let $\re^{ \N^n_{d} }$ denote the space of all real vectors $y$
that are labeled by $\af \in \N^n_{d}$. For each
$y \in \re^{ \N^n_{d} }$, we can label it as
$y \, = \, (y_\af)_{ \af \in \N^n_{d} }$.
Such $y$ is called a
{\it truncated multi-sequence} (tms) of degree $d$.
The $y$ is said to admit an $S$-representing measure $\mu$
if there exists a Borel measure $\mu$, with $\supp{\mu} \subseteq S$, such that
\[
y_{\alpha} \,= \, \int x^{\alpha} d\mu, \quad \forall \, \alpha \in \nd.
\]
A tms $y \in \mathbb{R}^{\nd}$  defines the bilinear operation on $ \mathbb{R}[x]_{d}$ as
\begin{equation}
\langle \sum_{\alpha\in\nd} p_{\alpha}x^{\alpha}, y \rangle =
\sum_{\alpha\in\nd} p_{\alpha}y_{\alpha}.
\end{equation}
For a polynomial $q \in \mathbb{R}[x]_{2k}$ and for $y \in \mR^{\mathbb{N}^n_{2k}}$,
the $k$th order localizing matrix of $q$ generated by $y$ is the symmetric matrix
${L}_{q}^{(k)}[y]$ such that
\begin{equation} \label{exp:sdr}
	\left\langle q p^2, y\right\rangle=\operatorname{vec}
\left(p\right)^{T}\left({L}_{q}^{(k)}[y]\right)
\operatorname{vec}\left(p\right),~ \forall \,
p \in \mathbb{R}[x]_{k-\lceil \deg(q)/2\rceil} .
\end{equation}
Here, $\operatorname{vec}\left(p\right)$ denotes the coefficients of $p$
in the graded lexicographical ordering. When $q=1$,
the localizing matrix ${L}_{q}^{(k)}[y]$ reduces to the
$k$th order moment matrix, for which we denote by $M_k[y]$.

Denote the degree
\begin{equation}
    d_S \coloneqq  \displaystyle\max_{i\in[m_1],j\in[m_2]}\{1,\lceil\deg(h_i)/2\rceil,\,
    \lceil\deg(g_j)/2\rceil\}.
\end{equation}
If $y\in\re^{\n^n_{2k}}$ admits an $S$-representing measure $\mu$,
then we must have
%\[
%\operatorname{vec}(p)^T(L_{h_i}^{(k)}[y])\operatorname{vec}(p)=\int_K h_ip^2d\mu = 0,
%\]
%\[
%\operatorname{vec}(p)^T(L_{g_j}^{(k)}[y])\operatorname{vec}(p)=\int_K g_jp^2d\mu\ge0
%\]
%for all $p\in\r[x]_{k-\lceil d_K/2\rceil}$, thus $M_k[y]\succeq 0$,
\begin{equation}\label{flat1}
M_k[y]\succeq 0, \quad
L_{h_i}^{(k)}[y]=0 \,(i \in [m_1]),\quad
L_{g_j}^{(k)}[y]\succeq 0\,(j \in [m_2]).
\end{equation}
A typical condition for $y$ to admit an $S$-representing measure
is the flat extension or flat truncation
\cite{curto2005truncated,niecertificate}.
The tms $y$ is said to have a  flat truncation if
there exists a degree $ t\in [d_S,k]$ such that
\be \label{flat:tru}
\rank \, M_{t-d_S}[y] \, = \, \rank \, M_t[y].
\ee
When \reff{flat1} and \reff{flat:tru} hold, the truncation
\be \label{y2t}
y|_{2t}  \, \coloneqq \, (y_\af)_{ \af \in \N^n_{2t} }
\ee
admits an $S$-representing measure whose support consists of
$\rank \, M_t[y]$ points contained in $S$.
We refer to \cite{HKL20,lasserre2015introduction,sos,nie2023moment}
for these facts.

\section{The Moment Recovery Problem}
\label{section3}

For a polynomial $a_i(x)=  \sum_{\alpha \in \nd}a_{i,\alpha}x^{\alpha} \in \re[x]_d$,
if a tms $y \in \re^{\N^n_d}$ admits a representing measure $\mu$, then it holds
\[
 \int a_i(x)d\mu=\sum_{\alpha\in\nd}a_{i,\alpha}y_{\alpha}=\langle a_i,y\rangle.
\]
Let $K$ be the semialgebraic set as in \reff{K} and let
\[
d_1  =  2 \lceil d/2 \rceil .
\]
The $d_1$th order moment cone of the set $K$ is
\be \label{set:Rd(K)}
\mathscr{R}_{d_1}(K) \,  \coloneqq \,
\Bigg \{  y  \in \re^{ \N^n_{d_1} }
\left| \baray{c}
\exists \,  \mu \in \mathscr{B}(K) \, \st \\
y_{\alpha}=\int x^{\alpha} \mt{~d}\mu \, \, \forall  \,  \af \in \N^n_{d_1}
\earay \right.
\Bigg \}.
\ee
Choose a generic SOS polynomial $R \in \Sig[x]_{d_1}$, then
the optimization problem \reff{3.0} is equivalent to
\begin{equation}\label{p}
\left\{\begin{array}{cl}
    \min &\langle R,y\rangle \\
    \st &\langle a_i,y\rangle = b_i\,(i = 1,\dots,m),\\
    &y\in\mathscr{R}_{d_1}(K).
\end{array}\right.
\end{equation}
Note that the tms variable $y$ in \reff{p} has degree $d_1$, instead of $d$.
Denote the degree-$d_1$ nonnegative polynomial cone
\[
\mathscr{P}_{d_1}(K)  \coloneqq   \{f\in\r[x]_{d_1}:\,\,f(x)\ge 0 \,\, \forall x\in K \}.
\]
The cone $\mathscr{P}_{d_1}(K)$ is dual to $\mathscr{R}_{d_1}(K)$, i.e.,
 $\langle f, y \rangle \ge 0$
for all $f \in \mathscr{P}_{d_1}(K)$ and for all $y \in \mathscr{R}_{d_1}(K)$.
For $\lambda = (\lambda_1,\dots,\lambda_m)$,
the dual optimization of \reff{p} is
\begin{equation}\label{d}
\left\{\begin{array}{cl}
 \max  &  b^T\lambda \\
\st & R-\displaystyle\sum^{m}_{i=1}\lambda_i a_i \in \mathscr{P}_{d_1}(K),
\end{array}\right.
\end{equation}
Denote by $\phi^*$, $\psi^*$ the optimal values of \reff{p}, \reff{d} respectively.

For a degree $k  \ge  d_1/2$,
the $k$th order moment relaxation of {(\ref{p})} is
\begin{equation}\label{pk}
\left\{\begin{array}{cl}
 \min &\langle R,w\rangle\\
 \text{ s.t. }&\langle a_i,w\rangle = b_i\,\,\,(i=1,\dots,m),\\
 &L_{c_i}^{(k)}[w]=0\,\,(i\in\mathcal{E}),\\
 &L_{c_j}^{(k)}[w]\succeq0~(i\in\mathcal{I}),\\
 &M_k[w]\succeq 0, \\
 & w \in \re^{ \N^n_{2k} }.
 \end{array}\right.
\end{equation}
The tms variable $w$ in the above has degree $2k$.
The dual optimization of {(\ref{pk})} is  the $k$th order SOS relaxation
\begin{equation}  \label{dk}
\left\{\begin{array}{cl}
    \max & b^T\lambda\\
    \st& R-\displaystyle\sum^m_{i=1}\lambda_ia_i \in
    \ideal{c_{eq}}_{2k}+\qmod{c_{in}}_{2k}.
\end{array}\right.
\end{equation}
In the above, the polynomial tuples $c_{eq}$, $c_{in}$ are
\[
c_{eq}=(c_i)_{i\in\mathcal{E}}, \quad c_{in} = (c_j)_{j\in\mathcal{I}} .
\]
Denote by $\phi_k$, $\psi_k$ the optimal values of (\ref{pk}), (\ref{dk}) respectively.
%
%\subsection{Some properties}
%
The following are some properties of the above optimization problems.

\begin{prop} \label{thm31}
Suppose $K$ is compact and \reff{p} is feasible.
\begin{enumerate}[label = (\roman*)]
\item The optimal values of {(\ref{p})} and {(\ref{d})} are equal,
and the optimal value of {(\ref{p})} is achievable.

\item If $R$ is generic in $\Sigma[x]_{d_1}$, then \reff{p} has a unique minimizer $y^{\ast}$,
and $y^\ast$ admits a $K$-representing measure $\mu$ that is $r$-atomic with $r\le m$.
\end{enumerate}
\end{prop}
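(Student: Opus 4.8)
The plan is to prove the two parts separately, treating (i) as a duality/attainment statement for a conic linear program and (ii) as a genericity statement about the facial structure of the moment cone.

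For part (i), I would first set up the convex-geometric framework. The problem \reff{p} is a linear optimization over the intersection of the convex cone $\mathscr{R}_{d_1}(K)$ with an affine subspace defined by the equations $\langle a_i, y \rangle = b_i$. The key structural input is that when $K$ is compact, the moment cone $\mathscr{R}_{d_1}(K)$ is a closed convex cone, and moreover it is \emph{pointed} with nonempty interior (relative to its span). Compactness of $K$ is what guarantees closedness: a standard argument shows that any sequence of measures whose moments converge, after normalizing by total mass, has a weak-$*$ convergent subsequence supported in $K$, so the moment limit is again represented by a measure on $K$. With closedness in hand, I would invoke conic linear programming duality: since \reff{d} is the dual of \reff{p} and the primal is feasible, I need a Slater-type or attainment condition to rule out a duality gap. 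Here the generic choice of $R$ as a strictly positive SOS polynomial on the compact set $K$ ensures the objective $\langle R, y \rangle$ is coercive on the feasible set (bounded below, and sublevel sets are bounded because $R \geq \epsilon \|[x]_{\lceil d_1/2\rceil}\|^2$ controls the total mass and all moments). Coercivity plus closedness gives attainment of $\phi^*$, and the zero-gap statement $\phi^* = \psi^*$ follows from the closedness of the image cone together with strong duality for conic programs over closed cones.

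For part (ii), the heart is uniqueness of the minimizer and the atom-count bound $r \le m$. The bound $r \le m$ should come from a dimension/extreme-point argument: the feasible set of \reff{p} is $\mathscr{R}_{d_1}(K)$ cut by $m$ affine equations, and any minimizer of a linear functional over a closed convex set with a pointed recession cone is attained at an extreme point (a minimal face) of the feasible set. An extreme point $y^*$ of $\mathscr{R}_{d_1}(K)$ intersected with the $m$-codimensional affine slice must, by Carathéodory-type reasoning on the moment cone, be representable by a measure with at most $m$ atoms; this is the classical fact that extreme rays of the truncated moment cone are Dirac moments, and intersecting with $m$ linear constraints forces support size at most $m$. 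For uniqueness, I would argue that for generic $R$ the linear objective $\langle R, \cdot \rangle$ is not constant along any face of positive dimension of the feasible polytope-like set — i.e., the set of $R$ for which \reff{p} has two or more optimizers lies in a proper algebraic (measure-zero) subset of $\Sigma[x]_{d_1}$. Concretely, non-uniqueness would force $R - \sum \lambda_i a_i$ to vanish on at least two distinct atom configurations, which is a nongeneric coincidence.

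The main obstacle I anticipate is making the genericity argument in part (ii) rigorous rather than heuristic. Establishing $r \le m$ from extreme-point theory is standard once closedness and pointedness are secured, but proving that a \emph{generic} $R$ yields a \emph{unique} minimizer requires identifying the exceptional set precisely and showing it is nowhere dense (or of measure zero) in the cone $\Sigma[x]_{d_1}$. I would handle this by parametrizing the finitely many candidate minimal faces of the feasible set, noting that for each face of dimension $\ge 1$ the condition ``$R$ is constant on this face'' imposes a nontrivial linear constraint on $R$, and hence the union over all such faces is contained in a finite union of proper subspaces, whose complement is generic. The subtlety is controlling the possibly infinite family of faces of the moment cone; I expect the resolution is to reduce to the finitely many candidate support configurations via the Carathéodory bound, so that only finitely many linear conditions on $R$ need to be excluded.
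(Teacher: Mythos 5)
Your part (i) is workable but takes a different and slightly under-justified route compared with the paper. The paper's proof is one step: since $K$ is compact and $R>0$ on $K$, the polynomial $R$ lies in the interior of $\mathscr{P}_{d_1}(K)$, so $\lambda=0$ is a Slater (interior) point of the \emph{dual} \reff{d}; standard conic duality (cited as Theorem 1.5.1 of \cite{nie2023moment}) then gives both the zero gap and attainment of the primal optimum simultaneously. You instead get attainment from coercivity of $\langle R,\cdot\rangle$ plus closedness of $\mathscr{R}_{d_1}(K)$ (fine, though closedness itself needs the weak-$*$ argument you sketch), but your zero-gap step --- ``closedness of the image cone together with strong duality for conic programs over closed cones'' --- is not an argument: strong duality for conic programs requires a constraint qualification, and closedness of the cone alone does not supply one. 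You already have the needed ingredient (strict positivity of $R$ on $K$); you should use it as a dual Slater point rather than only for coercivity.

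The genuine gap is in your uniqueness argument in part (ii). Your plan is to show that non-uniqueness of the minimizer imposes, for each positive-dimensional face of the feasible set, a nontrivial linear condition on $R$, and then to conclude genericity because ``only finitely many linear conditions on $R$ need to be excluded,'' the reduction to finitely many cases being justified by the Carath\'eodory bound. This reduction fails: the Carath\'eodory bound limits the \emph{number} of atoms to at most $m$, but the atoms themselves range over the continuum $K$, so the candidate support configurations (and hence the faces of $\mathscr{R}_{d_1}(K)$ cut by the affine constraints) form an infinite, typically continuum-sized, family. The union of infinitely many proper hyperplanes in $\Sigma[x]_{d_1}$ need not be contained in any proper algebraic subset, so your exclusion argument does not close. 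The paper sidesteps this entirely by citing Proposition 5.2 of \cite{nie2014truncated} for uniqueness under generic $R$; a self-contained proof along your lines would instead need a measure-theoretic argument, e.g.\ that the support function of the (compact, convex) feasible set is differentiable at almost every $R$, and that differentiability at $R$ is equivalent to uniqueness of the minimizer --- not a finite hyperplane-avoidance argument. Your remaining steps in (ii) (minimizer at an extreme point, extreme points of the sliced moment cone have at most $m$ atoms) do match the paper's Carath\'eodory argument and are correct.
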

\begin{proof}
(i) Note that $\lambda = 0$ is an interior point of {(\ref{d})}, since $K$ is compact.
By the strong duality theorem for linear conic optimization theory
(e.g., \cite[Theorem 1.5.1]{nie2023moment}), the optimization problem
\reff{p} and \reff{d} have the same optimal value
and it is achievable for \reff{p}.

(ii) Since $K$ is compact, it follows from Proposition 5.2 of \cite{nie2014truncated}
that {(\ref{p})} has a unique minimizer $y^{\ast}$, for generic $R \in \Sigma[x]_{d_1}$.
In fact, $y^{\ast}$ must be  an extreme point of the feasible set of \reff{p}.
Since $y^\ast \in\mathscr{R}_{d_1}(K)$, it must admit a $K$-representing measure $\mu$
that is $r$-atomic with $r\le m$, by Carath\'{e}odory's Theorem.
\end{proof}

The following is the convergence theorem of the hierarchy of relaxations
\reff{pk}-\reff{dk}. We refer to the book \cite{nie2023moment}
for the linear independence constraint qualification condition (LICQC),
strict complementarity condition (SCC), and second order sufficient condition (SOSC)
in nonlinear optimization.

\begin{thm}\label{thm316}
Suppose $R$ is generic in $\Sigma[x]_{d_1}$, optimization {(\ref{p})} is feasible and  $\ideal{c_{eq}}+\qmod{c_{in}}$ is archimedean.
Suppose $\lambda^{\ast} = (\lmd_1^\ast, \ldots, \lmd_m^\ast)$
is a maximizer of \reff{d} and $w^{(k)}$ is a minimizer of \reff{pk}. Then, we have:

\begin{enumerate}[label=(\roman*)]

\item If   $w^{(k)}_0 = 0$,
then $w^{(k)}$ has a flat truncation.

\item If $w^{(k)}_0 \neq 0$ and LICQC, SCC, and SOSC all hold at every minimizer of
the polynomial optimization problem
\begin{equation}\label{dloc}
\left\{ \baray{cl}
 	\min\limits_{x \in \re^n} & R(x)-\sum\limits^{m}_{i=1}\lambda_i^* a_i(x) \\
 	 \text{s.t.} & c_i(x)=0, ~(i\in \mathcal{E}),\\
  &c_j(x)\ge0, ~(j\in \mathcal{I}),
\earay\right.
\end{equation}
then $\phi_{k}=\psi_k=\phi^{\ast}$ and every minimizer of \reff{pk}
has a flat truncation, when $k$ is sufficiently large.

\end{enumerate}
\end{thm}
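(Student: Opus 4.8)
The plan is to exploit the complementary slackness between the moment relaxation \reff{pk} and its SOS dual \reff{dk}, and to import the finite-convergence and flat-truncation machinery for Lasserre-type hierarchies. Write $q^\ast \coloneqq R - \sum_{i=1}^m \lambda_i^\ast a_i$, the polynomial appearing in the dual \reff{d}. First I record the ordering $\psi_k \le \phi_k \le \phi^\ast$ and $\psi_k \le \psi^\ast$. Indeed, weak duality between \reff{pk} and \reff{dk} gives $\psi_k \le \phi_k$; any $y$ feasible for \reff{p} lies in $\mathscr{R}_{d_1}(K)$ and so extends, through its representing measure, to a genuine degree-$2k$ $K$-moment tms that is feasible for \reff{pk} with the same objective value, whence $\phi_k \le \phi^\ast$; and since $\ideal{c_{eq}}_{2k} + \qmod{c_{in}}_{2k} \subseteq \mathscr{P}_{d_1}(K)$, every feasible point of \reff{dk} is feasible for \reff{d}, so $\psi_k \le \psi^\ast$. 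Because the archimedean hypothesis forces $K$ compact, Proposition \ref{thm31} applies and yields $\phi^\ast = \psi^\ast$ together with a unique minimizer $y^\ast$ of \reff{p} carrying a finitely atomic $K$-representing measure $\mu$.

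For part (i) I would use only the moment matrix. If $w^{(k)}_0 = 0$, then the $(0,0)$ entry of the positive semidefinite matrix $M_k[w^{(k)}]$ vanishes, forcing its entire $0$th row and column to be zero, i.e. $w^{(k)}_\alpha = 0$ for all $|\alpha| \le k$. Consequently $M_t[w^{(k)}] = 0$ for every $t$ with $2t \le k$, so for any such $t \ge d_S$ we get $\rank M_{t-d_S}[w^{(k)}] = \rank M_t[w^{(k)}] = 0$ and \reff{flat:tru} holds, the representing measure being the zero measure. The degree bookkeeping needed to select such a $t$ is routine.

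For part (ii) the value convergence comes from the optimality conditions. Since $\lambda^\ast$ is feasible for \reff{d}, $q^\ast \ge 0$ on $K$, hence $f_{\min} \coloneqq \min_{x \in K} q^\ast(x) \ge 0$, and \reff{dloc} is exactly the problem $\min_{x \in K} q^\ast$. Under the archimedean condition together with LICQC, SCC and SOSC at every minimizer of \reff{dloc}, the finite convergence theorem for Lasserre's hierarchy (see \cite{nie2023moment}) gives $q^\ast - f_{\min} \in \ideal{c_{eq}}_{2k} + \qmod{c_{in}}_{2k}$ for all large $k$; adding the nonnegative constant $f_{\min} \in \Sigma[x]_{2k}$ shows $q^\ast \in \ideal{c_{eq}}_{2k} + \qmod{c_{in}}_{2k}$, so $\lambda^\ast$ becomes feasible for \reff{dk}. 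Then $\psi_k \ge b^T \lambda^\ast = \psi^\ast = \phi^\ast$, and combined with the chain above this forces $\phi_k = \psi_k = \phi^\ast$ for $k$ large.

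It remains to certify flat truncation of a minimizer $w = w^{(k)}$ of \reff{pk}. Using $\langle a_i, w\rangle = b_i$ and $\phi_k = \phi^\ast = b^T\lambda^\ast$, I compute $\langle q^\ast, w\rangle = \langle R, w\rangle - \sum_i \lambda_i^\ast b_i = 0$. Writing the membership above as $q^\ast = \sum_{i\in\mathcal{E}} c_i \varphi_i + \sigma_0 + \sum_{j\in\mathcal{I}} \sigma_j c_j$ with $\sigma_0, \sigma_j$ SOS, the ideal part pairs to $0$ with $w$ because $L_{c_i}^{(k)}[w] = 0$, while $\langle \sigma_0, w\rangle \ge 0$ and $\langle \sigma_j c_j, w\rangle \ge 0$ by $M_k[w] \succeq 0$ and $L_{c_j}^{(k)}[w] \succeq 0$; as the total is $0$, each vanishes, giving $\langle \sigma_0, w\rangle = 0$ and $\langle \sigma_j c_j, w\rangle = 0$. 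These orthogonality relations, the finiteness of the minimizer set of \reff{dloc} (isolated by SOSC and LICQC on the compact $K$), and the genericity of $R$ place $w$ in the scope of the flat-truncation certificate (\cite{niecertificate,nie2023moment}), giving \reff{flat:tru} for all large $k$. I expect this last step to be the main obstacle: one must turn the orthogonality relations into the rank equality $\rank M_{t-d_S}[w] = \rank M_t[w]$, i.e. show that the range of $M_k[w]$ is effectively supported on the finite common-zero set of $q^\ast$ and the constraints, and it is precisely here that the genericity of $R$ and the nondegeneracy from SCC and SOSC are used.
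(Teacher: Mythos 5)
Your part~(i) is correct and is essentially the paper's own argument; the only difference is that the paper propagates the vanishing of $w^{(k)}_{\alpha}$ up to $|\alpha|\le 2k-2$ (so it certifies flatness whenever $k\ge d_S+1$), whereas your choice of $t$ with $2t\le k$ requires $k\ge 2d_S$.

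Part~(ii) has a genuine gap at the sentence where you claim that the finite convergence theorem gives $q^{\ast}-f_{\min}\in\ideal{c_{eq}}_{2k}+\qmod{c_{in}}_{2k}$ for all large $k$. Finite convergence, as proved in \cite{niecondition} and restated in \cite{nie2023moment}, is a statement about the optimal \emph{values} of the SOS relaxations; it does not assert that the SOS relaxation \emph{attains} its optimum, i.e., that an exact degree-bounded representation of $q^{\ast}-f_{\min}$ exists in the cone generated by the constraints as given. The two statements differ precisely when $\ideal{c_{eq}}$ is not real radical, a case the hypotheses of the theorem do not exclude: for instance the single equation $(\|x\|^2-1)(\|x\|^2+1)=0$ cuts out the unit sphere and satisfies LICQC at every real point of it, yet $\|x\|^2-1\notin\ideal{(\|x\|^2-1)(\|x\|^2+1)}$. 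What the results of \cite{niecondition} yield in general (and this is exactly how the paper quotes them) is only a decomposition $q^{\ast}\in\sigma_0+\ideal{V_{\r}(c_{eq})}$ with $\sigma_0\in\qmod{c_{in}}$, i.e., a certificate modulo the vanishing ideal of the \emph{real variety}; such a certificate need not belong to $\ideal{c_{eq}}_{2k}+\qmod{c_{in}}_{2k}$ for any $k$. Consequently your deduction that $\lambda^{\ast}$ itself is feasible for \reff{dk} is unsupported, and so is everything downstream: the complementary-slackness identity $q^{\ast}=\sum_{i\in\mathcal{E}}c_i\varphi_i+\sigma_0+\sum_{j\in\mathcal{I}}\sigma_jc_j$ presupposes that same representation, and the flat-truncation results of \cite{niecertificate} that you invoke are proved under assumptions that include this kind of dual attainment.

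The paper's proof is engineered to avoid exactly this obstacle, and this is where the two arguments genuinely diverge. It uses the genericity of $R$ a second time (beyond Proposition~\ref{thm31}) to get $R>0$ on the compact set $K$, invokes Theorem~3.2 of \cite{hny2} to obtain a single order $k_0$ such that the \emph{perturbed} memberships $q^{\ast}+\eps R\in\ideal{c_{eq}}_{2k_0}+\qmod{c_{in}}_{2k_0}$ hold for every $\eps>0$, observes that this makes $\lambda^{\ast}/(1+\eps)$ (not $\lambda^{\ast}$) feasible for \reff{dk}, and then lets $\eps\to 0$ in the dual objective to conclude $\psi_{k_0}=\phi^{\ast}$ --- a value statement that never requires attainment; flat truncation of the minimizers of \reff{pk} is then quoted from \cite{hny2} as well, not from \cite{niecertificate}. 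If you want to repair your proof, this $\eps R$ perturbation with a degree bound uniform in $\eps$ is the missing ingredient, and it is also the real reason the theorem assumes $R$ to be generic.
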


\begin{proof}
(i) When $w^{(k)}_0 = 0$, we have $w_{\alpha}^{(k)}=0$ for all $|\alpha|\le k$,
since $M_k[w^{(k)}]\succeq 0$.  This implies that $M_k[w^{(k)}] \operatorname{vec}(x^{\alpha})=0$
for all $|\alpha| \le k-1$. For every $|\alpha|\le 2k-2$, we can rewrite  it as $\alpha = \beta+\eta$ with
$|\beta| \le k-1$ and $|\eta|\le k-1$, then we have that
\[
w_{\alpha}^{(k)}=\operatorname{vec}(x^{\beta})^T M_k[w^{(k)}]\operatorname{vec}(x^{\eta})=0.
\]
Therefore, the truncation $w^{(k)}|_{2k-2}$ is flat.

(ii) We first show that $\psi_k = \psi^{\ast}$ for all $k$ big enough.
For convenience, denote
\[
R_{\lambda} \, \coloneqq \, R-\sum^m_{i=1}\lambda_ia_i .
\]
Clearly, the minimum value of \reff{dloc} is nonnegative.
When LICQC, SCC and SOSC hold at every minimizer,
Theorem 1.1 of \cite{niecondition} implies that for  $k$ big enough, there exists $\sigma_{0}\in\qmod{c_{in}}$ such that
\[
R_{\lambda^{\ast}} \in \sigma_{0}+\ideal{V_{\r}(c_{eq})} .
\]
It means that  $R_{\lambda^{\ast}}-\sigma_0$
vanishes identically on the real variety $V_{\r}(c_{eq}).$
%By Real Nullstellensatz (see \cite{sos} or \cite{nie2023moment}),
%there exist a power $r\in\n$ and an SOS polynomial $\sigma_1\in\Sigma[x]$ such that
%\[
%(R_{\lambda^{\ast}}-\sigma_0)^{2r}+\sigma_1\in\ideal{c_{eq}}.
%\]
Since $\ideal{c_{eq}}+\qmod{c_{in}}$ is archimedean, the set $K$ is compact.
%%The zero vector $\tilde{\lambda}=(0,\dots ,0)$ is a feasible point of \reff{d}.
For generic $R \in \Sigma[x]_{d_1}$, we have $R >0$ on $K$.
By Theorem 3.2 of \cite{hny2}, one can similarly show that there exists $k_0\in\n$
such that for every $\eps > 0$, it holds that
\[
R_{\lambda^{\ast}} + \eps R \in  \ideal{c_{eq}}_{2k_0}+\qmod{c_{in}}_{2k_0}.
\]
Therefore, for every $\eps > 0$, $\lambda^{\ast}/(1+\eps)$ is feasible for \reff{dk}
at the relaxation order $k_0$. As $\eps \to 0$, we get
\[
b^T \lambda^{\ast}/(1+\eps) \, \rightarrow \, b^T\lambda^{\ast}.
\]
This means that $\psi_{k_0} = \phi^{\ast}$. Moreover, since $\psi_{k_0} \le \psi_k \le \phi^*$
for all $k \ge k_0$, we get $\psi_k = \psi^{\ast}$ for all $k \ge k_0$. By the strong duality theorem, we have $\phi_{k}=\psi_k=\phi^{\ast}$.
It follows from  Theorem 3.2 \cite{hny2} that  every minimizer $w^{(k)}$ of \reff{pk}
has a flat truncation when $k$ is big enough.
\end{proof}

%
%\subsection{Numerical examples}
%
In the following,  we give some examples to
show the performance of relaxations \reff{pk}--\reff{dk}
for solving \reff{p}-\reff{d}.
We choose the generic SOS polynomial $R$ as
\[
R=[x]_{d_1/2}^T G^T G[x]_{d_1/2},
\]
where $G$ is a randomly generated square matrix
whose entries obey the normal distribution.
The relaxations \reff{pk}--\reff{dk} are solved by the software
{\tt GloptiPoly~3} \cite{gloptipoly},
which calls the SDP solver {\tt SeDuMi} \cite{sedumi}. The computation is implemented in MATLAB R2023b on a laptop with  16G RAM.
For cleanness, only four decimal digits are shown
for computational results.

\begin{exm} Consider the polynomials
\[
\begin{gathered}
a_1 = x_1^3x_2-x_1^2x_2^2,\quad a_2 = x_2^3x_3-x_2^2x_3^2,\quad
a_3 = x_1^4-x_2^4,\\
a_4 = x_3^3x_4-x_3^2x_4^2,\quad a_5 = x_4^3x_1-x_4^2x_1^2,\quad
a_6= x_3^4-x_4^4,
\end{gathered}
\]
and
$b = \begin{bmatrix}
    1&1&2&1&1&2
\end{bmatrix}^T.
$
The set $K$ is
\[
 K = \{x\in\r^4\,: \,x\ge 0,\,x_1x_2-x_2x_3=0,\,x_2x_3-x_1x_4=0,\,\|x\|^2=1\}.
\]
For $k=2$, solving \reff{pk}, we get the $2$-atomic measure
$\mu =   \lambda_1\delta_{u_1} + \lambda_2\delta_{u_2}$, where
\[
\begin{gathered}
u_1= \begin{bmatrix}
    0.6920&
    0.1453&
    0.6920&
    0.1453
\end{bmatrix}^T,\quad \lambda_1 =   36.0156, \\
u_2=\begin{bmatrix}
     0.1954&
    0.6796&
    0.1954&
    0.6796
\end{bmatrix}^T,\quad \lambda_2 =  29.4743.
\end{gathered}
\]
The computation took around 0.13s.
\end{exm}

\begin{exm}\label{examplerandom}(random instances)
We consider some randomly generated MRPs. Let $K$ be the unit sphere
\[
K = \{x\in\r^n:\,\|x\|^2= 1\}.
\]
For a degree $d$, we generate a tms $y$ as
\[
y \, = \, c_1[u_1]_{d}+\dots+c_N[u_N]_{d}, \quad c_i > 0,
\]
where  $N=\binom{n+d}{d}$ and all entries of $u_i$
obey the normal distribution. The moment equations are
\[
b_i =\langle a_i,y\rangle,\,i=1,\dots,m,
\]
for randomly generated polynomials $a_i \in \re[x]_d$.
The coefficients of $a_i$ also obey the normal distribution.
For each case of $(n,d)$, we generate $100$ random instances and solve \reff{pk}.
Let $r$ be the cardinality of the representing measure for
a flat truncation of the minimizer of \reff{pk}.
The obtained length $r$ is reported in Table~\ref{Table:01}.
One can observe that  $r$ is typically much less than $m$.
\begin{table}[htb]
\centering
\caption{Values of the length $r$ for different  $(n,d)$.}
\label{Table:01}
%\begin{tabularx}{0.8\textwidth} {
%  | >{\raggedright\arraybackslash}X
%  | >{\centering\arraybackslash}X
%  | >{\centering\arraybackslash}X
%  | >{\centering\arraybackslash}X
%  | >{\centering\arraybackslash}X | }
\begin{tabular}{|c|c|c|c|c|}  \hline
 $(n,d)$ & $m = 4$ & $m = 6$ & $m = 8$ & $m = 10$ \\
 \hline
  $(3,3)$  & $1,2,3$   & $1,2,3,4$ & $1,2,3,4,5$  & $2,3,4,5,6$ \\
\hline
$(5,4)$ & $1,2,3$   & $1,2,3,4$ & $2,3,4,5,6$  & $1,3,4,5,6$  \\
\hline
$(8,6)$ & $1,2,3$   & $1,2,3$ & $1,2, 3, 4$  & $2,3,5,6$  \\
\hline
$(10,8)$ & $1,2,3$  & $1,2,3,5$&$2,3,4,5,6$  & $2,3,4,5,6$ \\
\hline
\end{tabular}
\end{table}
\end{exm}

%%%%%%%%%%%%%%%%%%%%%%%%%%%%%%%%%%%%%%%%%%%%%%%%%
\iffalse

\begin{exm}
In the following, we compare the length of atoms we get by
choosing the objective function $R$ of \reff{p}
to be the generic sum of squares polynomials and by choosing $R = 1$.
We use randomly generated MRPs as in 
Example~\reff{examplerandom} for $(n,d)=(6,3)$ and $m=4,6,8$.
For each case, we generate $100$ random instances and record the average value of $r$.
\begin{center}
    Table 2: Mean value of $r$ for $R = 1$ and $R$ being a generic SOS polynomial
\end{center}
\begin{center}
\begin{tabularx}{0.8\textwidth} {
  | >{\raggedright\arraybackslash}X
  | >{\centering\arraybackslash}X
  | >{\centering\arraybackslash}X
  | >{\centering\arraybackslash}X | }
 \hline
  & $m = 4$ & $m = 6$ & $m = 8$ \\
 \hline
  $R = 1$  & $2.53$   & 3.21 & $4.2$  \\
\hline
generic $R$& $2.04$   & 2.79 & $3.54$   \\

\hline
\end{tabularx}
\end{center}
We observe that choosing $R$ to be a generic
SOS polynomial can give a shorter length of atoms.
\end{exm}

\fi
%%%%%%%%%%%%%%%%%%%%%%%%%%%%%%%%%%%%%%%%%%%%%%%%%%%%%%%%%%%%%%

\section{Recovery for positive $K$-decomposable tensors}
\label{section4}

For positive integers $n$ and $d$, recall that $\mt{S}^d(\re^n)$ denotes the space of $n$-dimensional real symmetric tensors of order $d$.
In this section, we assume the set $K$ is given as
\begin{equation}\label{K2}
K  =  \left\{
\begin{array}{l|l}x \in \mathbb{R}^n &
\begin{array}{l}c_i(x)=0\,(i \in \mathcal{E}), \\
c_j(x) \geq 0\,(j \in \mathcal{I}),\\
 \|x\|^2=1
\end{array}\end{array}\right\},
\end{equation}
where $c_i$ and $c_j$ are homogeneous polynomials in $x$.
A tensor $\a\in\mt{S}^d(\r^n)$ is said to be {\it positive $K$-decomposable}
%\cite{huang2023generalized,nietensor}
if there exist vectors $u_1,\dots,u_r\in K$ such that
\[
\a = \sum^r_{i=1}\lambda_i(u_i)^{\otimes d}, \quad  \lambda_i > 0 .
\]
In some applications, people may require that the tensor $\a$
satisfies some linear equations like
\begin{equation}\label{sec4:equ}
\langle \f_i,\a\rangle=b_i,\, \,  i = 1,\dots,m,
\end{equation}
for given $\f_i \in \mt{S}^d(\r^n)$ and $b_i \in \re$.

Each $\a \in \mt{S}^d(\r^n)$ can be identified as
a homogeneous truncated multi-sequence (htms)
$h = (h_\af) \in \mathbb{R}^{\overline{\mathbb{N}}_{d}^{n}}$
in the way that (note $\af = (\af_1, \ldots, \af_n)$)
\begin{equation}\label{y}
 h_{\alpha}=\mathcal{A}_{\sigma_1 \dots \sigma_m} ~\quad \text{for}~ x_1^{\alpha_1}\cdots x_n^{\alpha_n}=x_{\sigma_1}\cdots x_{\sigma_m},
\end{equation}
where the power set $\overline{\mathbb{N}}_{d}^{n}$ is
\[
\overline{\mathbb{N}}_{d}^{n} \, \coloneqq \,
\left\{\alpha \in \mathbb{N}^{n}:|\alpha|  = d\right\}.
\]
Denote the homogeneous polynomials
\[
f_i(x)=\langle \f_i,x^{\otimes d}\rangle, \,\,   i=1, \ldots, m.
\]
Then $\langle \f_i, \a\rangle=\langle f_i, h \rangle$ for each $i$.
The tensor $\a$ satisfies \reff{sec4:equ}
if and only if the htms $h$ satisfies the equations
\begin{equation}\label{ff}
\langle f_1,h\rangle = b_1,\dots,\langle f_m,h\rangle = b_m.
\end{equation}

Let $d_1 = 2 \lceil d/2 \rceil$. We consider the moment optimization problem
\begin{equation}\label{tp}
\left\{\begin{array}{cl}
\min&\langle R, z\rangle \\
\text{s.t.} &\langle f_i,z \rangle = b_i\,(i = 1,\dots,m),
\\&z \in \mathscr{R}_{d_1}(K),
    \end{array}\right.
\end{equation}
where $R \in \Sig[x]_{d_1}$ is a generic SOS polynomial.
The above tms variable $z$ can be viewed as an extension of the htms $h$.
Note that  \reff{tp} is a moment recovery problem as in Section \ref{section3}.
The dual optimization problem of \reff{tp} is
\begin{equation} \label{td}
\left\{\begin{array}{cl}
  \max   &  b^T\lambda \\
  \text{s.t.} & R-\displaystyle\sum^{m}_{i=1}\lambda_i f_i \in \mathscr{P}_{d_1}(K) .
\end{array}\right.
\end{equation}
For a degree $k$, the $k$th order moment relaxation of \reff{tp} is
\begin{equation}\label{tpk}
\left\{\begin{array}{cl}
    \min&\langle R,w\rangle\\
    \text{s.t.}&\langle f_i,w\rangle = b_i\,\,\,(i=1,\dots,m),\\
    &L_{c_i}^{(k)}[w]=0\,\,(i\in\mathcal{E}),\\
    &L_{c_j}^{(k)}[w]\succeq0\,\,(j\in\mathcal{I}),\\
    &L_{\|x\|^2-1}^{(k)}[w]=0,\\
    &M_k[w]\succeq 0, \\
    & w \in \re^{ \N^n_{2k} }.
\end{array}\right.
\end{equation}
The dual optimization of {(\ref{tpk})} is the $k$th SOS relaxation
\begin{equation}\label{tdk}
    \left\{\begin{array}{cl}
        \max
        &b^T\lambda\\
        \text{s.t.}&R-\displaystyle\sum^m_{i=1}\lambda_ia_i\in \ideal{c_{eq},\,\|x\|^2-1}_{2k}+\qmod{c_{in}}_{2k}.
    \end{array}\right.
\end{equation}
%We denote the optimal values of (\ref{tpk}) and (\ref{tdk}) by $\phi_k$ and $\psi_k$, respectively.

The convergent analysis of \reff{tpk}--\reff{tdk}
is quite similar to that in Section \ref{section3}.
We omit it for cleanness of presentation.
Suppose $w^{(k)}$ is a minimizer of \reff{tpk} and it has a flat truncation:
there exists a degree $t \in [d_1/2, k]$ such that
\be
\begin{gathered}
\rank \, M_{t-d_k}[w^{(k)}] \, = \, \rank \, M_{t}[w^{(k)}], 
\end{gathered}
\ee
where the degree $d_K = \max_{i \in \mc{E} \cup \mc{I} }
\big \{1, \lceil \deg(c_i) /2 \rceil \big \}$.
Then, there exist points $u_1,\dots,u_r \in K$  such that
\[
w^{(k)}|_{2t} = \sum\limits_{i=1}^r\lambda_{i}\left[u_{i}\right]_{2t}, \, \lmd_i >  0.
\]
It then implies the tensor decomposition (note $2t \ge d$)
\[
\mathcal{A} \, = \,  \lambda_{1}\left(u_{1}\right)^{\otimes d}+\cdots+\lambda_{r}\left(u_{r}\right)^{\otimes d}.
\]
%
%\subsection{Numerical examples}
%
The following are some examples to show the performance
of relaxations \reff{tpk}-\reff{tdk}.
%We choose $R$ equals to $[x]_{2}^TG^TG[x]_{2}$, where $G$ is a random square matrix
%whose entries obey normal distribution.

\begin{exm}\label{example4.4}

Consider the set
$$
K = \{x\in\mathbb{R}^4: x_1+x_2+x_3+x_4\ge 0,\,\|x\|^2=1\}.
$$
We look for a positive $K$-decomposable tensor
$\mathcal{A} \in  \mathbf{S}^3(\mathbb{R}^4)$ satisfying
\[
\begin{gathered}
\a_{111}-2\a_{222}=1,\quad2\a_{222}-3\a_{333}=1, \\
3\a_{333}-4\a_{444}=1,\quad2\a_{234}-3\a_{111}=1,\\
2\a_{133}-\a_{111}=1,\quad 2\a_{123}-\a_{244}=1,\\
2\a_{123}-\a_{333}=1,\quad \a_{123}-3\a_{344}=1.
\end{gathered}
\]
For $k = 3$, solving \reff{tpk}, we get
$\a = \lambda_1(u_1)^{\otimes 3}+\lambda_2(u_2)^{\otimes 3}+\lambda_3(u_3)^{\otimes 3}+\lambda_4(u_4)^{\otimes 3}$ with
\[
\begin{array}{ll}
  u_1 = \begin{bmatrix}
      0.3739&
   -0.3938&
   -0.5837&
    0.6036
\end{bmatrix}^T,   & \lambda_1 =   4.0168, \\ 
u_2 = \begin{bmatrix}
      -0.8588&
    0.1844&
    0.3594&
    0.3151
\end{bmatrix}^T,
     & \lambda_2 =   1.2445.\\
     u_3 = \begin{bmatrix}
   0.4349&
   -0.7627&
    0.4600&
   -0.1322
\end{bmatrix}^T,
     & \lambda_3 =    0.4925.\\
     u_4 = \begin{bmatrix}
     0.5668&
   -0.0367&
    0.2531&
   -0.7832
\end{bmatrix}^T,
     & \lambda_4 =    3.4336.
\end{array}
\]
The computation took around 0.38s.
\end{exm}

%%%%%%%%%%%%%%%%%%%%%%%%%%%%%%%%%%%%%%%%%%%%%%%%%%%%%%%%%%%%%%%%%%%%%%%%%%%%%%%%%%%
\iffalse

A tensor $\mathcal{H}\in\sm$ is called Hankel if $\mathcal{H}_{i_1,\dots,i_d}$
is invariant whenever the sum $i_1+\dots+i_d$ is a constant \cite{qi2013hankel,cover3}.
If $\mathcal{H}$ has a positive Vandermonde decomposition \cite{qi2013hankel},
i.e., there exist $ a_i,c_i\in\mathbb{R}$ and $\lambda_i\in \mathbb{R}_+$ such that
\begin{equation}
    \mathcal{H}=\sum^r_{i=1}\lambda_i(a_i^{n-1},a_i^{n-2}c_i,\dots,a_ic_i^{n-2},c_i^{n-1})^{\otimes d},
\end{equation} then $\mathcal{H}$ is called a complete Hankel tensor.
Let
\[
K = \{x\in\r^{n}:\,x_2^2-x_1x_2=0,\dots,x_{n-1}^2-x_{n-2}x_n=0,\,1-\|x\|^2=0\}.
\]
Then, we know that $\mathcal{H}$ is a complete Hankel tensor if and only if  $\mathcal{H}$ is a PSOP tensor on $K$.
%If the problem {(\ref{tp})} is feasible, then the system admits a Vandermonde decomposition, and we can recover a Complete Hankel tensor $\mathcal{H}$ satisfying $\f_i(\mathcal{H})=b_i \text{ for } i= 1,\dots,m.$
\begin{exm} Consider the complete Hankel tensor recovery satisfies
    $$
    \mathcal{H}_{1,1,1}-\mathcal{H}_{1,1,2} = 3,\quad \mathcal{H}_{1,2,2}-\mathcal{H}_{2,2,2}=2.5,\quad \mathcal{H}_{1,1,3}=1.
    $$
 At the relaxation order $k =2 $, we get that $\mathcal{H}=\lambda u^{\otimes 3},$ where
     $$ u =
    \begin{bmatrix}
         0.5674&
   -0.5179&
    0.4728&
   -0.4316
\end{bmatrix}^T, \quad \lambda = 10.8.
     $$
The computation takes about 0.42s.
\end{exm}

\fi
%%%%%%%%%%%%%%%%%%%%%%%%%%%%%%%%%%%%%%%%%%%%%%%%%%%%%%%%%%%%%%%%%

A tensor $\mA \in\mt{S}^d(\mathbb{R}^m)$ is said to be completely positive (CP)
if it is positive $K$-decomposable for the set
$K=\{x\in \mr^n: x_1\geq 0,\dots,x_n \geq 0,\,\|x\|^2=1\}$.

\begin{exm}
We look for a completely positive tensor $\mA \in  \mathbf{S}^3(\mathbb{R}^4)$ satisfying
\[
\begin{gathered}
\mathcal{A}_{113}+\mathcal{A}_{123}-\mathcal{A}_{223}=2,\quad \mathcal{A}_{223}+\mathcal{A}_{333}-\mathcal{A}_{114}=10, \\
\mathcal{A}_{333}+\mathcal{A}_{124}+\mathcal{A}_{134}=8,\quad \mathcal{A}_{114}-\mathcal{A}_{224}-\mathcal{A}_{234}=1.
\end{gathered}
\]
For the relaxation order $k = 2$, we get that
$\a = \lambda_1(u_1)^{\otimes 3}+\lambda_2(u_2)^{\otimes 3}$, where
\[
\begin{array}{ll}
  u_1 = \begin{bmatrix}
         0.8165&
    0.0000&
    0.0000&
    0.5774
    \end{bmatrix}^T,   &  \lambda_1 =2.5958,
    \\
    u_2 = \begin{bmatrix}
        0.4193&
    0.4741&
    0.7742&
   0.0000
    \end{bmatrix}^T,
     & \lambda_2 = 17.2414.
\end{array}
\]
The computation took around 0.30s.
\end{exm}

\section{General tensor recovery}
\label{section5}

In this section, we discuss general tensor recovery.
Let $K$ be the  semialgebraic set as in (\ref{K2}).
We look for a symmetric tensor $\a \in \mt{S}^d(\re^n)$ satisfying equations
\begin{equation}\label{1.5s}
\langle \f_i,\a\rangle=b_i,\,\, i = 1,\dots,m .
\end{equation}
Typically, we are interested in $\mA$ having the decomposition
\be \label{sytK}
\a = \sum^r_{i=1}\lambda_i(u_i)^{\otimes d}, \quad u_i \in K, \quad \lmd_i \in \re.
\ee
If the above hold, the tensor $\mA$ is said to be $K$-decomposable.
It is interesting to note that \reff{sytK} can be rewritten as
\[
\mathcal{A}=\sum\limits_{\lambda_i\geq 0} \lambda_{i}
 \left( u_{i}\right)^{\otimes d}-\sum\limits_{\lambda_i\leq 0}
 (-\lambda_{i})\left( u_{i}\right)^{\otimes d}.
\]
Let $h$ be the htms of $\a$ as in \reff{y}.
If we denote the measures
\begin{equation*}
\mu_1=\sum\limits_{\lambda_i\ge 0} \lambda_{i}\delta_{{u_i}},~\mu_2=\sum\limits_{\lambda_i\le 0} (-\lambda_{i})\delta_{{u_i}},
\end{equation*}
then for each $\alpha \in \overline{\mathbb{N}}_{d}^{n}$,
\[
h_{\alpha}=\int x^{\alpha} d\mu_1-\int x^{\alpha} d\mu_2.
\]
For the polynomials $f_i(x)=\langle \f_i,x^{\otimes d}\rangle$,
the relation~\reff{1.5s} becomes
\begin{equation} \label{fiA =bi}
\langle \f_i,\a\rangle=\int f(x) d\mu_1-\int f(x) d\mu_2=b_i,\, \, i = 1,\dots,m.
\end{equation}

In order to recover a tensor $\mA$ having the decomposition \reff{sytK} from
equations \reff{1.5s}, we consider the optimization problem
\begin{equation}\label{4.8}
\left\{\begin{array}{cl}
\min & \int R_1\, \mathrm{d} \mu_1+\int R_2\, \mathrm{d} \mu_2 \\
\hfill\text{s.t.} & \int f_i(x) \mathrm{d} \mu_1-
  \int f_i(x) \mathrm{d} \mu_2 = b_i\,\,(i=1,\dots,m), \\
 & \mu_1, \mu_2 \in \mathscr{B}\left(K\right).
\end{array}\right.
\end{equation}
Let $d_1 = 2 \lceil d/2 \rceil$.
If we use moments, the above optimization is equivalent to
\begin{equation} \label{4.9}
\left\{\begin{array}{cl}
\min & \langle R_1,y\rangle + \langle R_2,z\rangle \\
\st  & \langle f_i, y\rangle -\langle f_i, z\rangle =b_i\,\,(i=1,\dots,m), \\
 & y, z \in \mathscr{R}_{d_1}(K).
\end{array}\right.
\end{equation}
We remark that if we select $R_1 = R_2 = 1$ (the constant one polynomials),
the optimal value of \reff{4.9} will be the smallest nuclear norm of
$\mA$ satisfying \reff{fiA =bi}, when $K$ is the unit sphere.
We refer to \cite{nietensor} for tensor nuclear norms.
In this paper, we try to recover $\mA$ such that
its decomposition length is small.
For this purpose, we select generic SOS polynomials $R_1, R_2 \in \Sig[x]_{d_1}$.
The dual optimization problem of {$(\ref{4.9})$} is
\begin{equation}\label{4.10}
\left\{\begin{array}{cl}
\max  &  b^T\lambda \\
\st &  R_1- \displaystyle \sum^{m}_{i=1}\lambda_if_i \in \mathscr{P}_{d_1}(K),\\
&\displaystyle R_2+\sum^{m}_{i=1}\lambda_if_i\in \mathscr{P}_{d_1}(K).
\end{array}\right.
\end{equation}
%By similar reasoning as Lemma {\ref{lemma3.4}}, we get the following Proposition.
Denote the optimal value of {$(\ref{4.9})$}, {$(\ref{4.10})$} by $\phi^{\ast}$, $\psi^{\ast}$. For a degree $k$, the $k$th order moment relaxation of {$(\ref{4.9})$} is
\begin{equation}\label{4.11}
\left\{\begin{array}{cl}
\min &\langle R_1,w\rangle +\langle R_2,v\rangle \\
\text{s.t.}
& \langle f_i, w\rangle -\langle f_i, v\rangle =b_i\,\,(i=1,\dots,m),  \\
& L_{c_i}^{(k)}[w]=0\,\,(i\in\mathcal{E}),\, L_{c_i}^{(k)}[v]=0\,\,(i\in\mathcal{E}),\\
& L_{c_j}^{(k)}[w]\succeq 0\,\,(i\in\mathcal{I}),\,L_{c_j}^{(k)}[v]\succeq 0\,\,(i\in\mathcal{I}),\\
& L_{\|x\|^2-1}^{(k)}[w]=0,\,L_{\|x\|^2-1}^{(k)}[v]=0,\\
& M_k[w]\succeq 0,\, M_k[v]\succeq 0,\\
& w,v \in \re^{ \N^n_{2k} }.
\end{array}\right.
\end{equation}
The dual optimization of {$(\ref{4.11})$} is then
\begin{equation}\label{4.12}
\left\{\begin{array}{cl}
\max &  b^T\lambda \\
\text{s.t.}& R_1- \displaystyle \sum^{m}_{i=1}\lambda_if_i \in \ideal{c_{eq},\|x\|^2-1}_{2k}+ \qmod{c_{in}}_{2k},\\
&R_2+ \displaystyle \sum^{m}_{i=1}\lambda_if_i \in \ideal{c_{eq},\|x\|^2-1}_{2k} + \qmod{c_{in}}_{2k}.
\end{array}\right.
\end{equation}
Denote the optimal values of \reff{4.11}, \reff{4.12} by $\phi_{k},\psi_k$, respectively.
Suppose $(w^{(k)},v^{(k)})$ is a minimizer of \reff{4.11} and 
both $w^{(k)}$ and $v^{(k)}$ have flat truncations:
there exists a degree $t \in [d_1/2, k]$ such that
\be 
\begin{gathered}
\rank \, M_{t-d_k}[w^{(k)}] \, = \, \rank \, M_{t}[w^{(k)}],  \\
\rank \, M_{t-d_k}[v^{(k)}] \, = \, \rank \, M_{t}[v^{(k)}]. 
\end{gathered}
\ee
where the degree $d_K = \max_{i \in \mc{E} \cup \mc{I} } 
\big \{1, \lceil \deg(c_i) /2 \rceil \big \}$.
Then, there exist vectors $u_1,\dots,u_{r}\in K$ and $r_1 \le r$ such that
\[
w^{(k)}=\sum_{i=1}^{r_1} \lambda_{i}\left[u_{i}\right]_{2k}, \, \lmd_i > 0,  \quad v^{(k)}=\sum_{j=r_1+1}^{r} \lmd_{j}\left[u_{j}\right]_{2k}, \, \lmd_j > 0.
\]
The above implies the tensor decomposition  (note $2t \ge d$)
\[
\mathcal{A} = \sum_{i=1}^{r_1} \lambda_{i}\left(u_{i}\right)^{\otimes d} -
\sum_{j=r_1+1}^{r} \lmd_{j} \left( u_j \right)^{\otimes d}.
\]
%Note that $\lambda=0$ is an interior point of {(\ref{4.9})}, {$(\ref{4.12})$}.

We refer to \cite{nie2023moment} for the optimality conditions
LICQC, SCC and SOSC.

\begin{thm}
Suppose $\lambda^* = (\lmd_1^*, \ldots, \lmd_m^*)$ is a maximizer of \reff{4.10}
and $(w^{(k)},v^{(k)})$ is a minimizer of \reff{4.11}.
Consider the following optimization problems
 \begin{equation}\label{dloc2}
   \left\{ \baray{cl}
   	\min\limits_{x \in \re^n}  & R_1(x) - \sum\limits^{m}_{i=1}\lambda_i^* f_i(x) \\
    \st & c_i(x)=0 ~(i\in \mathcal{E}),\\
     &c_j(x)\ge 0 ~(j\in \mathcal{I}),\\
      &\|x\|^2-1=0,\\
    \earay\right.
\end{equation}
and
\begin{equation}\label{dloc3}
   \left\{ \baray{cl}
    \min\limits_{x \in \re^n} & R_2(x) + \sum\limits^{m}_{i=1}\lambda_i^* f_i(x) \\
    \st & c_i(x)=0 ~(i\in \mathcal{E}),\\
     &c_j(x)\ge 0 ~(j\in \mathcal{I}),\\
     &\|x\|^2-1=0.\\
    \earay\right.
\end{equation}
Then, when $k$ is big enough, we have $\phi_{k}=\psi_{k}=\phi^{\ast}$
and the  minimizer $(w^{(k)},v^{(k)})$ of \reff{4.11} has a flat truncation, if any one of the following holds:
\begin{enumerate}[label=(\roman*)]
\item $w^{(k)}_0>0,\,v^{(k)}_0>0$ and LICQC, SCC, SOSC hold at every minimizer of \reff{dloc2} and  \reff{dloc3}.

\item $w^{(k)}_0=v^{(k)}_0=0$.

\item $w^{(k)}_0=0,\,v^{(k)}_0>0$ and LICQC, SCC, SOSC hold at every minimizer of \reff{dloc3}.

\item $w^{(k)}_0>0,\,v^{(k)}_0=0$ and LICQC, SCC, SOSC hold at every minimizer of \reff{dloc2}.
\end{enumerate}

\end{thm}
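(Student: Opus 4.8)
The plan is to reduce the proof to the single-measure convergence analysis of Theorem~\ref{thm316}, since the optimization \reff{4.9} is essentially two independent copies of the moment recovery problem \reff{p}, coupled only through the linear equality constraints $\langle f_i, y\rangle - \langle f_i, z\rangle = b_i$. First I would observe that the dual \reff{4.10} has $\lambda = 0$ as a strictly feasible point (both memberships reduce to $R_1, R_2 \in \mathscr{P}_{d_1}(K)$, which holds since $R_1, R_2$ are SOS), so by the strong duality theorem for linear conic programs the primal value $\phi^*$ equals $\psi^*$ and is attained; this also transfers to the relaxations via the standard moment-SOS weak-duality chain $\psi_{k} \le \psi^* = \phi^* \le \phi_k$. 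The real content is to show $\psi_k = \psi^*$ for all large $k$, after which $\phi_k = \psi_k = \phi^*$ follows and flat truncation of the minimizer is obtained from the relevant Positivstellensatz representation.

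The key step is a localization-plus-perturbation argument applied separately to each of the two dual membership constraints, mirroring part~(ii) of Theorem~\ref{thm316}. Write $R_{1,\lambda^*} \coloneqq R_1 - \sum_i \lambda_i^* f_i$ and $R_{2,\lambda^*} \coloneqq R_2 + \sum_i \lambda_i^* f_i$; these are exactly the objectives of \reff{dloc2} and \reff{dloc3}, each nonnegative on $K$. In the generic case (i), when LICQC, SCC, SOSC hold at every minimizer of \reff{dloc2} and \reff{dloc3}, I would invoke Theorem~1.1 of \cite{niecondition} to get, for $k$ large, decompositions $R_{j,\lambda^*} \in \sigma_j + \ideal{V_{\r}(c_{eq},\,\|x\|^2-1)}$ with $\sigma_j \in \qmod{c_{in}}$; then since $R_j > 0$ on the compact set $K$ (generic SOS $R_j$ is positive on $K$) and $\ideal{c_{eq},\|x\|^2-1}+\qmod{c_{in}}$ is archimedean (because $\|x\|^2 = 1$ forces boundedness), Theorem~3.2 of \cite{hny2} yields a truncation order $k_0$ such that $R_{j,\lambda^*} + \eps R_j$ lies in the truncated ideal-plus-quadratic-module for every $\eps > 0$. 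This makes $\lambda^*/(1+\eps)$ feasible for \reff{4.12} at order $k_0$ simultaneously for both constraints, and letting $\eps \to 0$ gives $b^T\lambda^*/(1+\eps) \to b^T\lambda^* = \psi^*$, hence $\psi_{k_0} = \psi^*$ and $\psi_k = \psi^*$ for all $k \ge k_0$ by monotonicity. Flat truncation of $(w^{(k)}, v^{(k)})$ then follows from Theorem~3.2 of \cite{hny2} applied to each component.

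The remaining cases are degenerate variants handled by the same machinery. Cases (iii) and (iv) are asymmetric: whichever measure has vanishing zeroth moment (say $w^{(k)}_0 = 0$) is treated by the flatness argument of part~(i) of Theorem~\ref{thm316}, which shows $w^{(k)}|_{2k-2}$ is automatically flat without any optimality hypothesis, while the nonvanishing component is handled by the localization argument above requiring LICQC/SCC/SOSC only at the minimizers of the corresponding problem \reff{dloc2} or \reff{dloc3}. Case (ii) with $w^{(k)}_0 = v^{(k)}_0 = 0$ is the easiest, as both flat truncations come for free from the moment-matrix positivity argument. I expect the main obstacle to be a bookkeeping subtlety rather than a deep one: I must verify that the two perturbation orders $k_0$ coming from the two separate constraints can be taken as a common bound, and that perturbing by $+\eps R_1$ in the first constraint and $+\eps R_2$ in the second is consistent with a \emph{single} scaled multiplier $\lambda^*/(1+\eps)$ feasible for the joint system \reff{4.12} --- this works precisely because the sign of $\lambda^*$ enters the two constraints oppositely, so the same scaling shrinks both objectives toward their respective boundaries, but this coupling is the one place where the single-measure argument does not apply verbatim and must be checked carefully.
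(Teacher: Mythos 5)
Your handling of cases (i) and (ii) is correct and is essentially the paper's own argument: the Slater point $\lambda=0$ for \reff{4.10}, the Nie/\cite{hny2}-type representations of $R_1-\sum_i\lambda_i^*f_i+\epsilon R_1$ and $R_2+\sum_i\lambda_i^*f_i+\epsilon R_2$ at a common truncation order, feasibility of $\lambda^*/(1+\epsilon)$ for \reff{4.12} (your observation that the opposite signs of $\lambda$ in the two constraints is what makes a single scaling work is exactly the point the paper uses implicitly), and the automatic flatness $w^{(k)}|_{2k-2}=0$ when a zeroth moment vanishes. One slip: your inequality chain should be $\psi_k\le\phi_k\le\phi^*=\psi^*$, not $\psi_k\le\psi^*=\phi^*\le\phi_k$; the moment relaxation \reff{4.11} enlarges the feasible set of \reff{4.9}, so $\phi_k\le\phi^*$. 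Your conclusion survives because $\psi_k=\psi^*$ still collapses the corrected chain.

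The genuine gap is in cases (iii) and (iv). You claim the nonvanishing component is ``handled by the localization argument above requiring LICQC/SCC/SOSC only at the minimizers of the corresponding problem.'' But that localization argument proves feasibility of $\lambda^*/(1+\epsilon)$ for the \emph{joint} dual \reff{4.12}, which needs uniform-order truncated representations of \emph{both} perturbed polynomials. In case (iii) the hypotheses supply the representation only for $R_2+\sum_i\lambda_i^*f_i+\epsilon R_2$; for $R_1-\sum_i\lambda_i^*f_i+\epsilon R_1$ all you have is nonnegativity on $K$ plus Putinar's Positivstellensatz, whose degree bound depends on $\epsilon$ and blows up as $\epsilon\to 0$ --- this yields only asymptotic convergence $\psi_k\to\psi^*$, not the finite convergence $\phi_k=\psi_k=\phi^*$ being claimed. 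The repair is to use the vanishing component \emph{before} passing to the dual: since $w^{(k)}|_{2k-2}=0$ and $\deg R_1,\deg f_i\le d_1\le 2k-2$ for $k$ large, the pair $(0,v^{(k)})$ has the same objective and constraint values as $(w^{(k)},v^{(k)})$, so $v^{(k)}$ is a minimizer of the $k$th moment relaxation of the single-measure problem $\min\{\langle R_2,z\rangle:\ -\langle f_i,z\rangle=b_i,\ z\in\mathscr{R}_{d_1}(K)\}$, whose SOS dual involves only the constraint $R_2+\sum_i\lambda_i f_i\in\ideal{c_{eq},\|x\|^2-1}_{2k}+\qmod{c_{in}}_{2k}$. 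There the hypothesis on \reff{dloc3} does suffice: $\lambda^*/(1+\epsilon)$ is feasible for this single-constraint dual at a uniform order, which forces $\psi^*\le$ (reduced dual value) $\le$ (reduced primal value) $=\phi_k\le\phi^*=\psi^*$, so $\phi_k=\phi^*$, flat truncation of $v^{(k)}$ follows from the \cite{hny2} theorem applied to the reduced problem, and $\psi_k=\phi_k$ then comes from strong duality between \reff{4.11} and \reff{4.12}. The paper's own text for (iii)/(iv) is silent (``the proofs \ldots are the same''), but the mechanism you wrote down is precisely the step that does not carry over verbatim and needs this reduction.
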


\begin{proof}
(i)	
Since $R_1, R_2$ are generic in $\Sig[x]_{d_1}$, $\lambda = (0,\dots ,0)$ is an interior point of
the feasible set of \reff{4.10}. Thus, the Slater condition holds
for \reff{4.8}-\reff{4.9}, and hence $\phi^{\ast} = \psi^{\ast}$.
It follows from the proof of
Theorem~\ref{thm316} of \cite{hny2} that there exists $k_0\in\n$
such that for all $\epsilon>0$,
\[
R_1-\sum_{i=1}^{m}\lambda^{\ast}_if_i(x)+\epsilon R_1
 \in\ideal{c_{eq},\|x\|^2-1}_{2k_0}+\qmod{c_{in}}_{2k_0},
\]
\[
R_2+\sum_{i=1}^{m}\lambda^{\ast}_if_i(x)+\epsilon R_2
\in\ideal{c_{eq},\|x\|^2-1}_{2k_0}+\qmod{c_{in}}_{2k_0}.
\]
It implies that $\lmd^*/(1+\eps)$ is feasible for \reff{4.12}.
As $\epsilon \to 0$, we get
\[
b^T(\frac{\lmd^\ast}{1+\epsilon})=\frac{1}{1+\epsilon}b^T\lambda^{\ast}\rightarrow b^T\lambda^{\ast}.
\]
Therefore, $\psi_{k_0} = \psi^{\ast}$, and by the strong duality, we have
$\phi_{k_0} = \psi_{k_0} = \phi^{\ast}$. Then, we have
$\phi_{k} = \psi_{k} = \phi^{\ast}$ for all $k \ge k_0$.
The conclusion that every minimizer of \reff{4.11}
has a flat truncation follows from Theorem~5.1 of \cite{hny2}.

(ii) When $w_0^{(k)} = v_0^{(k)} = 0$, the constraints $M_k[w^{(k)}]\succeq 0$ and $M_k[v^{(k)}]\succeq 0$ imply that $w_{\alpha}^{(k)}=0$ and $v_{\alpha}^{(k)}=0$ for all $\alpha$ such that $|\alpha|\le k.$ Similarly as in Theorem \ref{thm316} (i), we have that
 the truncations $w^{(k)}|_{2k-2}=0$, $v^{(k)}|_{2k-2}=0$ and then $w^{(k)}|_{2k-2}$ and $v^{(k)}|_{2k-2}$ are flat.
 Thus, $(w^{(k)}, v^{(k)})$ is a minimizer of \reff{4.11}, and $\phi_k = \phi^{\ast}$.
By the strong duality theorem, we have 
$\psi_k = \phi_k = \phi^{\ast}$ for all $k$ big enough.

The proofs for items (iii) and (iv) are the same. 
\end{proof}

%
%\subsection{Numerical examples}
%
In the following, we give some numerical examples.
We select the objectives as
\[
R_1=[x]_{d_1/2 }^TG_1^TG_1[x]_{ d_1/2 },~
R_2=[x]_{ d_1/2 }^TG_2^TG_2[x]_{ d_1/2  },
\]
where $G_1,\,G_2$ are randomly generated square matrices
whose entries obey the normal distribution.

\begin{exm}Consider the set
$$K = \{x\in\mathbb{R}^4: -x_1x_2x_3x_4\ge 0,\,-x_1-x_2\ge 0,-x_2-x_3\ge 0,\,\|x\|^2=1\}.$$
We look for a tensor $\mA \in\mt{S}^3(\mathbb{R}^4)$ satisfying equations
\[
\begin{gathered}
\a_{123}-\a_{444}=1,\quad \a_{234}-\a_{111}=1,\\
\a_{112}-\a_{333}=0,\quad \a_{221}-\a_{333}=0,\\
\a_{331}-\a_{444}=1,\quad \a_{334}-\a_{111}=1.
\end{gathered}
\]
At the relaxation order $k = 3$, we can recover that
$\a = \lambda_1(u_1)^{\otimes 3}-\lambda_2(u_2)^{\otimes 3}$, where
$$
\begin{array}{ll}
  u_1 = \begin{bmatrix}
     -0.7157&
    0.0002&
   -0.0806&
   -0.6938

\end{bmatrix}^T,   &  \lambda_1 =    2.7225,\\
 u_2=\begin{bmatrix}
     -0.0001&
   -0.1900&
   -0.2422&
    0.9511
\end{bmatrix}^T,    &  \lambda_2 =  0.1984.
\end{array}
$$
The computation took around 0.42s.
\end{exm}

\begin{exm}
Consider the set
\[
K=\{x\in\r^5:\,x_1+\cdots + x_5\ge 0,\,x_1x_2-x_3^2=0,\,x_3x_4-x_5^2=0,\,\|x\|^2=1\}.
\]
We look for a tensor $\a\in\mathbf{S}^3(\r^5)$ satisfying equations
\[
\begin{gathered}
\a_{223}-3\a_{111}+\a_{123}=2,\quad\a_{124}-\a_{333}+\a_{555}=9,\\
\a_{112}-3\a_{111}+\a_{123}=3,\quad \a_{344}-3\a_{111}+\a_{234}=2,\\
\a_{113}-3\a_{111}+\a_{123}=2,\quad \a_{155}-\a_{555}+\a_{235}=-12.
\end{gathered}
\]
For the relaxation order $k = 4$, we recover that
$\mA = \lambda_1(u_1)^{\otimes 3}+\lambda_2(u_2)^{\otimes 3}-\lambda_3(u_3)^{\otimes 3}-\lambda_4(u_4)^{\otimes 3}$, where
\[\begin{array}{ll}
    u_1 =\begin{bmatrix}
     0.5859&
    0.5679&
   -0.5768&
   -0.0025&
   -0.0382\end{bmatrix}^T, & \lambda_1 =   18.5321, \\
    u_2 = \begin{bmatrix}
    -0.4205&
   -0.4231&
    0.4218&
    0.5038&
    0.4610
\end{bmatrix}^T, & \lambda_2 =  43.6313.\\
 u_3 =\begin{bmatrix}
    0.1883&
    0.5077&
    0.3092&
    0.6423&
   -0.4457\end{bmatrix}^T, & \lambda_3 =10.5309, \\
 u_4 =\begin{bmatrix}
    0.9477&
    0.0920&
    0.2953&
    0.0195&
    0.0759\end{bmatrix}^T, & \lambda_4 =   1.9118. \\
\end{array}
\]
The computation took about 0.12s.
\end{exm}

\begin{exm} (random instances)
We consider the randomly generated tensor recovery. Let
$K = \{x\in\r^n:\,\|x\|^2= 1\}$ be the unit sphere.
We generate random $\f_1,\dots,\f_m$ whose entries obey the normal distribution. 
The obtained length $r$ is reported in Table~\ref{Table:03}.
One can see that  $r$ is typically much less than $m$.
\begin{table}[htb]
\centering
\caption{Values of the length $r$
for different $(n,d)$.}
\label{Table:03}
%\begin{tabularx}{0.8\textwidth} {
%  | >{\raggedright\arraybackslash}X
%  | >{\centering\arraybackslash}X
%  | >{\centering\arraybackslash}X
%  | >{\centering\arraybackslash}X
%  | >{\centering\arraybackslash}X | }
\begin{tabular}{|c|c|c|c|c|} \hline
$(n,d)$ & $m = 4$ & $m = 8$ & $m = 10$ & $m = 12$ \\  \hline
$(3,5)$  & $1, 2$   & $1,2,3,4,5$ & $2,3,4,6$ & $2,3,4,5,8$  \\ \hline
$(4,4)$  & $1, 2, 3 $  & $1,2,3, 4$ & $2,3,4$  & $2,3,4,5$  \\ \hline
$(6,6)$  & $1,2$   & $1,2,3$ & $1,2,4,5$  & $2,4,5,6$  \\ \hline
$(5,10)$  & $1,2$   & $1,2,3,4,5$ & $2,3,4,5,6$ & $2,3,4,5,6,7$  \\ \hline
\end{tabular}
\end{table}
\end{exm}

\section{Conclusion}

In this paper, we study moment and tensor recovery problems
whose decomposing vectors are contained in some constraining sets.
The Moment-SOS hierarchy is applied to 
solve the resulting moment optimization problems.
We choose generic SOS objective polynomials,
so that the obtained moment and tensor decompositions have low length.
The convergence properties of the Moment-SOS relaxations are studied.
Some numerical experiments are provided.
% apply the relaxation to solve tensor recovery problems on constraining sets and show that we can often find a low rank tensor. There are board applications of the generalized truncated moment problem and tensor recovery problem (see \cite{ji2019survey, koldatensor, huang2023generalized, nie2023moment}).

\bigskip 
\subsection*{Acknowledgements}
The authors are partially supported by the NSF grant DMS-2110780.

%\bibliographystyle{amsrefs}
%\bibliography{refs.bib}
\end{document}